\newtheorem{theorem}{Theorem}
\newtheorem{lemma}{Lemma}
\newtheorem{definition}{Definition}
\newtheorem{claimm}{Claim}{\itshape}{\rmfamily}
\newcommand{\es}{\hat{\oplus}}
\newcommand{\septhanks}{\ $^,$}
\title{The Structure of $W_4$-Immersion-Free Graphs\thanks{Emails of authors:
\href{mailto:remybelmonte@gmail.com}{{\sf
remybelmonte@gmail.com}}
\href{mailto:archontia.giannopoulou@gmail.com}{{\sf
archontia.giannopoulou@gmail.com}},
\href{mailto:daniello@ii.uib.no
}{{\sf daniello@ii.uib.no}},  
\href{mailto:sedthilk@thilikos.info}{{\sf
sedthilk@thilikos.info}}}}
\let\@fnsymbol\@alph\makeatother
\author{Rémy Belmonte\thanks{Dept. of Architecture and Architectural Engineering, Kyoto University, Japan}\septhanks\thanks{Supported by the ELC project (Grant-in-Aid for Scientific Research on Innovative
Areas, MEXT Japan)}
\and
Archontia Giannopoulou\thanks{Institute of Informatics, University of Warsaw, Warsaw, Poland.}\septhanks\thanks{Supported by the Warsaw Center of Mathematics and Computer Science.} 
\and 
Daniel Lokshtanov\thanks{Department of Informatics, University of Bergen, Norway.}\septhanks\thanks{Supported by BeHard  grant  under  the  recruitment  programme  of  the  Bergen  Research
Foundation.}
\and \\ Dimitrios
M. Thilikos\thanks{AlGCo project team, CNRS, LIRMM, Montpellier,
    France.}\septhanks\thanks{Department of Mathematics,
National and Kapodistrian University of Athens.}\septhanks\thanks{Co-financed by the European Union (European Social Fund ESF) and Greek national funds through the Operational Program “Education and Lifelong Learning” of the National Strategic Reference Framework (NSRF) - Research Funding Program: ARISTEIA II.}
}
\begin{document}

\date{\empty}
\maketitle

%
%
%
%
%
%
%
%
%
%
%

\begin{abstract}
\noindent We study the structure of graphs that do not contain the wheel on 5 vertices $W_4$ as an immersion, and show that these graphs can be constructed via 1, 2, and 3-edge-sums from subcubic graphs and graphs of bounded treewidth.
\end{abstract}

\noindent{\bf Keywords:}
Immersion Relation, Wheel, Treewidth, Edge-sums, Structural Theorems

\section{Introduction}

A recurrent theme in structural graph theory is the study of specific properties that arise in graphs when excluding a fixed pattern.
The notion of appearing as a pattern gives rise to various graph containment relations.
Maybe the most famous example is the minor relation that has been widely studied, in particular since the fundamental results of Kuratowski and Wagner who proved that planar graphs are exactly those graphs that contain neither $K_5$ nor $K_{3,3}$ as a (topological) minor.
A graph $G$ contains a graph $H$ as a topological minor if $H$ can be obtained from $G$ by a sequence of vertex deletions, edge deletions and replacing internally vertex-disjoint paths by single edges.
Wagner also described the structure of the graphs that exclude $K_{5}$ as a minor: he proved that $K_{5}$-minor-free graphs can be constructed by ``gluing" together (using so-called clique-sums) planar graphs and a specific graph on $8$ vertices, called Wagner's graph. 

Wagner's theorem was later extended in the seminal Graph Minor series of papers by Robertson and Seymour (see e.g.~\cite{RobertsonS03a}), which culminated with the proof of Wagner's conjecture, i.e., that graphs are well-quasi-ordered under minors~\cite{RobertsonS04}, and ended with the proof of Nash-Williams' immersion conjecture, i.e., that the graphs are also well-quasi-ordered under immersions~\cite{RS10}. Other major results in graph minor theory include the (Strong) Structure Theorem~\cite{RobertsonS03a}, the Weak Structure Theorem~\cite{RobertsonS95b}, the Excluded Grid Theorem~\cite{RobertsonS86,RobertsonST94,KawarabayashiK12a}, as well as numerous others, e.g.,~\cite{SeymourT93-Grap,KawarabayashiRW11,DawarGK07}.
Moreover, the structural results of graph minor theory have deep algorithmic implications, one of the most significant examples being the existence of cubic time algorithms for the $k$-{\sc Disjoint Paths} and $H$-{\sc Minor Containment} problems~\cite{RobertsonS95b}. For more applications see, e.g.,~\cite{KawarabayashiW10,DemaineFHT05sube,AdlerKKLST11,KawarabayashiK08,BodlaenderFLPS09}.

However, while the structure of graphs that exclude a fixed graph $H$ as a minor has been extensively studied, the structure of graphs excluding a fixed graph $H$ as a topological minor or as an immersion has not received as much attention. While a general structure theorem for topological minor free graphs was very recently provided by Grohe and Marx~\cite{GM12}, 
finding an exact characterization of the graphs that exclude $K_5$ as a topological minor remains a notorious open problem.
Recently, Wollan gave a structure theorem for graphs excluding complete graphs as immersions~\cite{Wollan15}.
A graph $G$ contains a graph $H$ as a immersion if $H$ can be obtained from $G$ by a sequence of vertex deletions, edge deletions and replacing edge-disjoint paths by single edges.
Observe that if a graph $G$ contains a graph $H$ as a topological minor, then $G$ also contains $H$ as an immersion, as vertex-disjoint paths are also edge-disjoint.
In 2011, DeVos et al.~\cite{DDFMMS11} proved that if the minimum degree of a graph $G$ is at least $200t$ then $G$ contains the complete graph on $t$ vertices as an immersion.
In~\cite{FGTW08} Ferrara et al. provided a lower bound on the minimum degree of any graph $G$ in order to ensure that a given graph $H$ is contained in $G$ as an immersion.

A common drawback of such general results is that they do not provide sharp structural characterizations for concrete instantiations of the excluded graph $H$.
In the particular case of immersion, such structural results are only known when excluding both $K_{5}$ and $K_{3,3}$ as immersions\cite{GiannopoulouKT15}.
In this paper, we prove a structural characterization of the graphs that exclude $W_{4}$ as an immersion and show that they can be constructed from graphs that are either subcubic or have treewidth bounded by a constant. We denote by $W_4$ the wheel with~4 spokes, i.e., the graph obtained from a cycle on~4 vertices by adding a universal vertex.
The structure of graphs that exclude $W_4$ as a topological minor has been studied by Farr~\cite{Farr88}. He proved that these graphs can be constructed via clique-sums of order at most~3 from graphs of maximum degree at most~3. However, this characterization only applies to simple graphs. In our study we exclude $W_{4}$ as an immersion while allowing multiple edges. Robinson and Farr later extended this result by obtaining similar, albeit more complex, characterizations of graphs that exclude $W_6$ and $W_7$ as a topological minor~\cite{RF09a,RobinsonF14}.

As with the minor relation, many algorithmic results have also started appearing in terms of immersions.
In~\cite{GKMW11}, Grohe et al. gave a cubic time algorithm that decides whether a fixed graph $H$ immerses in any input graph $G$.
This algorithm, combined with the well-quasi-ordering of immersions~\cite{RS10}, implies that the membership of a graph in any graph class that is closed under taking immersions can be decided in cubic time. However, the construction of such an algorithm requires the ad-hoc knowledge of the finite set of excluded immersions that characterizes this graph class (which is called obstruction set). While no general way to compute an obstruction set is known, in~\cite{GiannopoulouSZ14}, Giannopoulou et al. provided sufficient conditions, under which the obstruction set of any graph class that is closed under taking immersions becomes effectively computable.
Another example of explicit construction of immersion obstruction sets is given by Belmonte et al.~\cite{BelmonteHKPT13}, where the set of immersion obstructions is given for graphs of carving-width~3.
Finally, for structural and algorithmic results on immersions in terms of colorings, see~\cite{KawarabayashiK12,Abu-KhzamL03,Lescure1988325,DKMO10}.

Our paper is organized as follows: in Section \ref{sec:preliminaries}, we give necessary definitions and previous results. In Section \ref{sec:invariance}, we show that containment of $W_4$ as an immersion is preserved under 1, 2 and~3-edge-sums. Then, in Section \ref{sec:main}, we provide our main result, i.e., a decomposition theorem for graphs excluding $W_4$ as an immersion. Finally, we conclude with remarks and open problems.

\section{Preliminaries}
\label{sec:preliminaries}
For undefined terminology and notation, we refer to the textbook of Diestel~\cite{Diestel}.
For every integer $n$, we let $[n]=\{1,2,\dots,n\}$.
All graphs we consider are finite, undirected, and without self-loops but may have  multiple edges. 
Given a graph $G$ we denote by $V(G)$ and $E(G)$
its {\em vertex} and {\em edge set} respectively.
Given a set $F\subseteq E(G)$ (resp. $S\subseteq V(G)$), we denote 
by $G\setminus F$ (resp. $G\setminus S$) the graph obtained from $G$ if we remove the edges in $F$ (resp. the vertices in $S$ along with their incident edges). 
We denote by ${\cal C}(G)$ the set of the {\em connected components} of $G$.
Given two vertices $u,v\in V(G)$, we also use the notation $G- v=G\setminus \{v\}$ and the notation $uv$ for the edge $\{u,v\}$. 
The {\em neighborhood} of a vertex $v\in V(G)$, denoted by $N_{G}(v)$, is the set of vertices in $G$ that are adjacent to $v$. We denote by $E_{G}(v)$ the set of the edges of $G$ that are incident with $v$.
The {\em degree} of a vertex $v\in V(G)$, denoted by $\deg_{G}(v)$, 
is the number of edges that are incident with it, that is, $\deg_{G}(v)=|E_{G}(v)|$. 
Notice that, as we are working with multigraphs, $|N_{G}(v)|\leq \deg_{G}(v)$.
The degree of a set $S$, denoted by $\partial(S)$, is the number of edges between $S$ and $V(G) \setminus S$, that is $|\{uv\in E(G) \mid u\in S \wedge v \not\in S\}|$.
Given two vertices $u$ and $v$ with $u\in N(v)$ we say that $u$ is an $i$-neighbor of $v$ if $E(G)$ contains exactly $i$ copies of the edge $\{u,v\}$. 
Let $P$ be a path and $v,u\in V(P)$. We denote by $P[v,u]$ the subpath of $P$ with endpoints $v$ and $u$.
The {\em maximum degree} of a graph $G$, denoted by $\Delta(H)$ is the maximum of the degrees of the vertices of $G$, that is, $\Delta(G)=\max_{v\in V(G)}\deg_{G}(v)$.

We denote by $W_{k-1}$ the {\em wheel} on $k$ vertices, that is, the graph obtained from the cycle of length $k-1$ after adding a new vertex and making it adjacent to all of its vertices. We call the new vertex {\em center} of the wheel.

\begin{definition}
An immersion of $H$ in $G$ is a function $\alpha$ with domain $V(H) \cup E(H)$, such that:
\begin{itemize}
\item $\alpha(v) \in V(G)$ for all $v \in V(H)$, and $\alpha(u)\neq\alpha(v)$ for all distinct $u,v \in V(H)$;
\item for each edge $e$ of $H$, 
$\alpha(e)$ is a path of $G$ with ends $\alpha(u), \alpha(v)$;
\item for all distinct $e,f \in E(H), E(\alpha(e) \cap \alpha(f))=\emptyset$.
\end{itemize}
\end{definition}

\noindent We call the image of every such function $\alpha$ in $G$ {\em model} of the graph $H$ in $G$ and the vertices of the set $\alpha(V(H))$ {\em branch} vertices of $\alpha$.

An {\em edge cut} in a graph $G$ is a non-empty set $F$ of edges that belong to the same connected component of $G$ and such that $G \setminus F$ has more connected components than $G$.
If $G \setminus F$ has one more connected component than $G$ and no proper subset of $F$ is an edge cut of $G$, then we say that $F$ is a {\em minimal} edge cut.
Given a vertex set $S$ such that $G[S]$ and $G \setminus S$ are connected, we denote by $(S,G \setminus S)$ the cut between $S$ and $G \setminus S$.
Let $F$ be an edge cut of a graph $G$ and let $G$ be the connected component of $G$ containing the edges of $F$. We say that $F$ is an {\em internal} edge cut if it is minimal and both connected components of $G \setminus F$ contain
at least 2 vertices.
An edge cut is also called {\em $i$-edge cut} if it has order at most $i$.

\begin{definition}
Let $G$, $G_1$, and $G_2$ be graphs. Let $t \geq 1$ be a positive integer. The graph $G$ is a $t$-edge-sum of $G_1$ and $G_2$ if the following holds.
There exist vertices $v_i \in V(G_i)$ such that $|E_{G_{i}}(v_i)|=t$ for $i\in[2]$ and a bijection $\pi: E_{G_{1}}(v_{1}) \rightarrow E_{G_{2}}(v_2)$ such that $G$ is obtained from $(G_{1} - v_{1}) \cup (G_{2} - v_{2})$ by adding an edge from $x\in V(G_1)-v_1$ to $y\in V(G_2)-v_2$ 
for every pair of edges $e_{1}$ and $e_{2}$ such that $e_{1}=xv_{1}$, $e_{2}=yv_{2}$, and $e_{2} = \pi(e_{1})$.
We say that the edge-sum is internal if both $G_1$ and $G_2$ contain at least 2 vertices and denote the internal $t$-edge-sum of $G_1$ and $G_2$ by $G_1 \hat{\oplus}_t G_2$.
\end{definition}

Note that if $G$ is the $t$-edge-sum of graphs $G_1$ and $G_2$ for some $t \geq 0$, then the set of edges $\{\{u,v\} \in E(G) \mid u \in V(G_1), v \in V(G_2)\}$ forms a minimal edge cut of $G$ of order $t$.\\

Let $r$ be a positive integer. The {\em $(r,r)$-grid} is the graph with vertex set $\{(i,j)\mid i,j\in [r]\}$ and edge set $\{\{(i,j),(i',j')\}\mid |i-i'| + |j-j'|=1\}$.
The {\em (elementary) wall} of height $r$ is the graph $W_r$ with vertex set $V(W_r) = \{(i,j) \mid i\in [r+1], j \in [2r+2]\}$ in which we make two vertices $(i,j)$ and $(i',j')$ adjacent if and only if either $i=i'$ and $j' \in \{j-1,j+1\}$ or $j'=j$ and $i'=i+(-1)^{i+j}$, and then remove all vertices of degree~1; see Figure~\ref{f-wall} for some examples.
The vertices of this vertex set are called {\em original} vertices of the wall.
A {\em subdivided wall} of height $r$ is the graph obtained from $W_{r}$ after replacing some of its edges by internally vertex-disjoint paths.

\begin{figure}[h]
\begin{center}
\includegraphics[scale=0.9]{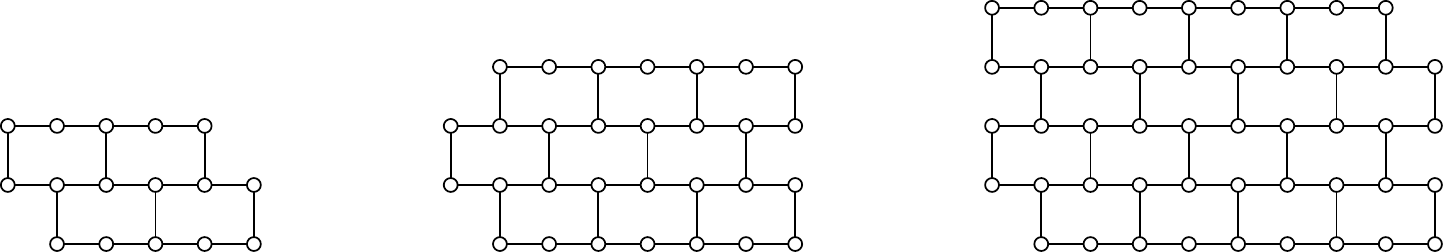} 
\caption{Elementary walls of height 2, 3, and 4.}\label{f-wall}
\end{center}
\end{figure}

Let $r$ be a positive integer and notice that the wall of height $r$ is contained in the $((2r+2)\times(2r+2))$-grid as a subgraph. This implies that any graph containing the $((2r+2)\times (2r+2))$-grid as a minor also contains the wall of height $r$ as a minor.
Furthermore, from a folklore result, for any simple graph $H$ with $\Delta(H)\leq 3$ it holds that $H$ is a minor of a graph $G$ if and only if $H$ is a topological minor of $G$.

\begin{theorem}\cite{LeafS12}
\label{thm:big-wallgen}
Let $G$ and $H$ be two graphs, where $H$ is connected and simple, not a tree, and has $h$ vertices. Let also $g$ be a positive integer. 
If $G$ has treewidth greater than $3(8h(h-2)(2g+h)(2g+1))^{|E(H)|-|V(H)|}+\frac{3h}{2}$ then $G$ contains either the $(g\times g)$-grid or $H$ as a minor.
\end{theorem}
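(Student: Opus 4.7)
The plan is to prove Theorem~\ref{thm:big-wallgen} by combining the classical Robertson--Seymour excluded grid theorem with an induction on the quantity $k := |E(H)|-|V(H)|$; note that $k\geq 0$ since $H$ is connected and not a tree. The first step is to use the excluded grid theorem (in its quantitative form) as a black box to convert the lower bound on the treewidth of $G$ into the presence of a subdivided wall of height $r(h,g,k)$ as a subgraph of $G$. This reduction explains the additive constant $\frac{3h}{2}$, which provides slack for the grid-to-wall passage and for accommodating $h$ branch vertices, while the multiplicative factor $f := 8h(h-2)(2g+h)(2g+1)$ captures the cost of a single inductive step, and the exponent $k$ records how many such steps are required.

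For the base case $k=0$, the graph $H$ is connected with exactly $|V(H)|$ edges and therefore consists of a single cycle with pendant trees attached. In a subdivided wall of height roughly $h+g$, either a $(2g+1)\times(2g+1)$ subwall yields the $(g\times g)$-grid as a minor, or we route the unique cycle of $H$ along a sufficiently long face-cycle of the wall and attach the pendant trees using disjoint corridors, producing $H$ as a topological minor and hence as a minor. For the inductive step $k\geq 1$, I would pick a non-bridge edge $e=uv$ of $H$ (such an edge exists since $H$ contains a cycle) and set $H' := H-e$; then $H'$ is connected and $|E(H')|-|V(H')|=k-1$. Inside a suitable sub-wall, apply the inductive hypothesis with the enlarged grid parameter $g':=2g+h$: either this sub-wall already contains a $(g\times g)$-grid minor, or it provides a model of $H'$ whose branch sets and connecting paths occupy only a controlled sub-region of the wall. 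The surplus of the wall, of size proportional to $(2g+h)(2g+1)$, is then used to route one additional edge-disjoint path between the branch sets assigned to $u$ and $v$, upgrading the $H'$-model to an $H$-model.

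The main obstacle will be the parameter accounting in the inductive step: one must arrange the $H'$-model inside the sub-wall so that its footprint leaves a dedicated routing region untouched, and then establish the existence of the extra edge-disjoint path through a Menger-type argument applied to this region. The factor $h(h-2)$ reflects the fact that the endpoints $u,v$ of the deleted edge may, a priori, be any non-adjacent pair among the $h$ vertices of $H$, so the sub-wall must accommodate all such pairings; the constant $8$ absorbs the standard ratio between the height of an elementary wall and the side length of the grid minor it certifies. Iterating the induction $k$ times accumulates the multiplicative factor $f^{k}$, and adding the slack $\frac{3h}{2}$ from the initial treewidth-to-wall reduction yields the bound stated in the theorem.
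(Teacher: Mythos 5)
The paper does not prove this theorem: it is quoted verbatim from the unpublished Leaf--Seymour manuscript \cite{LeafS12} and used as a black box, so there is no ``paper's own proof'' to compare against. Evaluating your sketch on its own merits, the inductive skeleton --- inducting on $k=|E(H)|-|V(H)|$, deleting a non-bridge edge to reduce to $H'$ and then rerouting one extra path --- is indeed the natural shape for a result of this type and resembles an ear-decomposition argument. But the opening move is fatally flawed: you cannot invoke the excluded grid theorem to convert the stated treewidth lower bound into a wall of height $r(h,g,k)\approx h+g$. The hypothesis only gives treewidth greater than $3(8h(h-2)(2g+h)(2g+1))^{k}+\tfrac{3h}{2}$, which for each fixed $H$ is a polynomial in $g$ of degree $2k$, whereas every known quantitative grid/wall theorem requires treewidth at least $\Omega(r^{2}\log r)$ (and the best upper bounds, e.g.\ Theorem~\ref{thm:chuzhoywall}, need roughly $r^{36}$) to certify a wall of height $r$. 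For small $k$ the hypothesis is therefore far too weak to produce a wall large enough to host a $(g\times g)$-grid, and the additive $\tfrac{3h}{2}$ cannot possibly absorb the ``grid-to-wall passage'' as you suggest.

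There is also a structural circularity to beware of: Theorem~\ref{thm:big-wall}, the quantitative wall theorem you would be invoking, is itself derived from Theorem~\ref{thm:big-wallgen} in the paper (by taking $H$ to be a wall of height $r$ and $g=2r+2$). The whole point of the Leaf--Seymour result is that, for graphs $H$ with small excess $|E(H)|-|V(H)|$, it yields a \emph{much smaller} treewidth threshold than any route through the grid theorem could provide; so it cannot be deduced from the grid theorem. A correct proof must build the $H$-model (or grid) directly from a large tangle/bramble or a linked path system, without first extracting a wall. Your inductive step and the parameter bookkeeping for $f=8h(h-2)(2g+h)(2g+1)$ are reasonable guesses at the mechanism, but they are floating without a working base: both the base case $k=0$ and the ``find a grid in a sub-wall'' branch of the inductive step presuppose a wall you have no means to obtain.
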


 Theorem~\ref{thm:big-wallgen}, in the case where $g=2r+2$ and $H$ is the wall of height $r$, can be restated
as the well known fact that large treewidth ensures the existence of a large wall as a topological minor:

\begin{theorem}\cite{LeafS12}
\label{thm:big-wall}
Let $G$ be a graph and $r\geq 2$ be an integer. If the treewidth of $G$ is greater than $2^{18r^{2}\log r}$ then $G$ contains the wall of height $r$ as a topological minor.
\end{theorem}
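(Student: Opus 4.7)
The plan is to derive Theorem~\ref{thm:big-wall} as a direct corollary of Theorem~\ref{thm:big-wallgen} by making a careful choice of parameters and then using the two structural remarks that appear immediately before the theorem statement.

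First, I would apply Theorem~\ref{thm:big-wallgen} with $H$ equal to the elementary wall $W_r$ of height $r$ and with $g = 2r+2$. It is immediate from the definition of $W_r$ that it is connected, simple (no multiple edges in the elementary wall), and not a tree for $r\geq 2$ (it contains $6$-cycles). Let $h = |V(W_r)|$; a direct count from the definition gives $h \leq 2(r+1)(2r+2) = O(r^2)$, and by Euler's formula applied to the planar drawing of the wall one gets $|E(W_r)| - |V(W_r)| = O(r^2)$ as well (essentially the number of hexagonal faces).

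With these parameters, Theorem~\ref{thm:big-wallgen} tells us that if the treewidth of $G$ exceeds $3(8h(h-2)(2g+h)(2g+1))^{|E(W_r)|-|V(W_r)|} + \tfrac{3h}{2}$, then $G$ contains as a minor either the $((2r+2)\times(2r+2))$-grid or $W_r$ itself. Now I would invoke the first remark preceding the theorem: the wall of height $r$ is a subgraph of the $((2r+2)\times(2r+2))$-grid, so a $(g\times g)$-grid minor of $G$ already yields a $W_r$-minor of $G$. Hence in either case $G$ contains $W_r$ as a minor. Finally I would invoke the folklore fact stated just before the theorem: since $W_r$ is simple with $\Delta(W_r) = 3$, having $W_r$ as a minor is equivalent to having $W_r$ as a topological minor. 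This yields the desired conclusion.

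It then remains to verify the numerical bound. Plugging $h = O(r^2)$ and $g = 2r+2$ into $3(8h(h-2)(2g+h)(2g+1))^{|E(W_r)|-|V(W_r)|} + \tfrac{3h}{2}$, the base is $O(r^8)$ and the exponent is $O(r^2)$, so the whole expression is of the form $r^{O(r^2)} = 2^{O(r^2\log r)}$. A routine (but slightly tedious) computation of the hidden constants shows that $18$ suffices for the exponent of $2$, which gives the stated threshold $2^{18r^{2}\log r}$. The only part that is not essentially bookkeeping is this last constant-chasing step: squeezing the explicit constants into the advertised $18r^2\log r$ rather than some larger constant in the exponent, which is the main (though still elementary) obstacle.
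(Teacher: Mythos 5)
Your proposal is correct and takes exactly the same route as the paper, which presents Theorem~\ref{thm:big-wall} as an immediate restatement of Theorem~\ref{thm:big-wallgen} with $g=2r+2$ and $H=W_r$, combined with the two remarks stated just before (the wall sits inside the $(2r+2)\times(2r+2)$-grid, and minor containment coincides with topological-minor containment for simple subcubic $H$). The only thing worth flagging is a couple of harmless overestimates in your bookkeeping: $|V(W_r)|\leq (r+1)(2r+2)=2(r+1)^2$ (you wrote $2(r+1)(2r+2)$), and the base $8h(h-2)(2g+h)(2g+1)$ is $O(r^7)$ rather than $O(r^8)$ since $2g+1=O(r)$; neither affects the conclusion, and a direct count gives $|E(W_r)|-|V(W_r)|=r^2-1$, from which one checks that $3\bigl(8h(h-2)(2g+h)(2g+1)\bigr)^{r^2-1}+\tfrac{3h}{2}\leq 2^{18r^2\log r}$ for all $r\geq 2$, as you assert.
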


We would like to note here that, regarding the dependence between the treewidth of a graph $G$ and the height of the wall that it contains as a topological minor, very recently, the following theorem was shown:

\begin{theorem}\cite{Chuzhoy15}
\label{thm:chuzhoywall}
There exists a function $f$ such that for any integer $r\geq 1$, any graph of treewidth at least $f(r)$ contains the wall of height $r$ as a topological minor, 
where $f(r)=O(r^{36}\text{poly}\log{r})$.
\end{theorem}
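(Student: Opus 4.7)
The plan is to follow the \emph{path-of-sets system} approach that Chuzhoy and collaborators developed for polynomial bounds in the excluded grid theorem. First I would use the standard correspondence between high treewidth and rich connectivity: from a graph $G$ of treewidth $\tau$ one can extract either a vertex of very high degree (which is handled separately by routing paths through its neighborhood) or a $k$-\emph{well-linked} set $T\subseteq V(G)$ of size polynomial in $\tau$, by means of a splitter-style argument that iteratively discards poorly-connected pieces. The quantitative relationship between $\tau$ and the achievable $|T|$ is the heart of any polynomial bound, so I would calibrate parameters so that $|T|$ is of order $\tau^{\Omega(1)}$, and in particular $\Omega(r^{c})$ for some absolute constant $c$.

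Next I would build a path-of-sets system of length and width both $\Omega(r)$: a sequence $S_{1},\dots,S_{\ell}$ of pairwise vertex-disjoint connected clusters together with a large collection of internally disjoint paths joining consecutive clusters, where each $S_{i}$ remains well-linked with respect to its interface vertices. The construction is iterative: starting from $T$, I would repeatedly partition the current well-linked set into two roughly balanced well-linked pieces separated by a thick linkage, using a max-flow/min-cut routing argument, and shave off only a polylogarithmic fraction of vertices per step so that well-linkedness survives. The polynomial accounting depends on a \emph{rerouting lemma} that converts arbitrary linkages between consecutive clusters into non-interfering ones while losing only a polylogarithmic factor; this is the ingredient that distinguishes Theorem~\ref{thm:chuzhoywall} from the exponential bound of Theorem~\ref{thm:big-wall}.

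Once the path-of-sets system is in place, the final step is to convert it into an $r\times r$ grid minor. Inside each middle cluster I would route horizontal crossing paths that meet the vertical linkages in the correct cyclic order, using the well-linkedness of $S_{i}$ to guarantee the existence of such a family via a Menger-type routing, and then contract each cluster-interface piece to obtain a grid minor of the required size. The wall of height $r$ then follows immediately, since the wall has maximum degree $3$ and so, by the folklore equivalence recorded in Section~\ref{sec:preliminaries}, it appears as a topological minor whenever it appears as a minor.

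The main obstacle is the polynomial accounting in the iteration: naive splitting loses a constant factor of well-linkedness per cut and therefore only reproduces the exponential bound $2^{18r^{2}\log r}$ of Theorem~\ref{thm:big-wall}. The real work is in designing the rerouting lemma so that each iteration survives with only polylogarithmic degradation, which in turn relies on probabilistic or LP-rounding arguments for routing in well-linked subgraphs, together with a careful bookkeeping of how the parameters of the path-of-sets system shrink under composition. Without this ingredient I see no route to the stated $O(r^{36}\,\text{poly}\log r)$ dependence, and the constant $36$ in the exponent should emerge from tracking the exponents contributed by each of the three stages (extraction, iteration, grid assembly).
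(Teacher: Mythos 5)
This statement is not proved in the paper at all: it is quoted verbatim as an external result, cited to Chuzhoy's STOC 2015 paper, and the authors explicitly choose not to use it (they work with Theorem~\ref{thm:big-wall} from Leaf and Seymour instead, because it gives explicit constants). So there is no in-paper proof to compare you against. Your sketch is, however, a reasonable high-level account of Chuzhoy's actual argument: the extraction of a large well-linked set from high treewidth, the iterative construction of a path-of-sets system with only polylogarithmic degradation per step via a rerouting lemma, the conversion of the path-of-sets system into a grid minor, and the final passage to the wall as a topological minor using the folklore fact that minor and topological-minor containment coincide for subcubic patterns. You also correctly identify the crux: naive halving of well-linkedness reproduces the exponential bound, and the polynomial bound lives or dies by the rerouting step. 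Two small caveats worth flagging. First, your sketch compresses what is in the literature a long, multi-paper development; in particular, the rerouting and ``boosting'' lemmas for well-linked sets involve substantial technical work that your one-sentence description of ``probabilistic or LP-rounding arguments'' does not really capture, so this is a roadmap rather than a proof. Second, the exponent $36$ is not something you derive but rather something you assert would ``emerge from tracking''; that is fine for a citation, but it would not constitute an independent verification of the claimed bound. For the purposes of this paper none of this matters, since the authors deliberately sidestep Theorem~\ref{thm:chuzhoywall} in favor of a bound with explicit constants.
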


However, even though Theorem~\ref{thm:chuzhoywall} proves a tighter dependence between the treewidth of a graph and the height of the contained wall, we will use 
Theorem~\ref{thm:big-wall} instead of Theorem~\ref{thm:chuzhoywall}, as it allows us to extract specific constants for fixed values of $r$.

\section{Invariance of $W_4$ containment under small edge-sums}
\label{sec:invariance}

In this section, we show that immersion of $W_{4}$ is completely preserved under edge-sums of order at most~3, i.e., that $W_{4}$ immerses in a graph $G$ if and only if it immerses in at least one of the graphs obtained by decomposing $G$ along edge-sums. Theorem \ref{thm:immersion-preserved} will be necessary in Section~\ref{sec:main} to ensure that our decomposition does not change whether the graphs considered contain $W_4$ as an immersion or not. We first prove the following general lemma.

\begin{lemma}
\label{lem:immersion-preserved}
If $G$, $G_{1}$, and $G_{2}$ are graphs such that $G=G_{1}\es_{t}G_{2}$, $t\in [3]$, then both $G_{1}$ and $G_{2}$ are immersed in $G$.
\end{lemma}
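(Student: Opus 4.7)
The plan is to construct, by the obvious symmetry, an immersion $\alpha$ of $G_1$ in $G$; the argument for $G_2$ follows by swapping the roles of $G_1$ and $G_2$. Let $v_1\in V(G_1)$ and $v_2\in V(G_2)$ be the vertices featuring in the $t$-edge-sum, write $E_{G_1}(v_1)=\{e_i=x_iv_1:i\in[t]\}$ and $E_{G_2}(v_2)=\{f_i=y_iv_2:i\in[t]\}$ with $\pi(e_i)=f_i$, and for each $i$ denote by $c_i=x_iy_i$ the crossing edge created in $G$. The $c_i$'s are pairwise distinct edges of the multigraph $G$ and together form the minimal edge cut of $G$ separating $V(G_1)\setminus\{v_1\}$ from $V(G_2)\setminus\{v_2\}$, as observed after the definition of the edge-sum; in particular, $G_2-v_2$ is connected.

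I will build $\alpha$ as follows: take the identity on $V(G_1)\setminus\{v_1\}$ and on $E(G_1)\setminus E_{G_1}(v_1)$ (interpreting each such edge as a one-edge path of $G$), set $\alpha(v_1)=z$ for some $z\in V(G_2)\setminus\{v_2\}$ chosen carefully, and set $\alpha(e_i)=P_i$ for a path $P_i$ from $x_i$ to $z$ in $G$, with the $P_i$'s pairwise edge-disjoint. Since the identity portion of $\alpha$ already consumes every edge of $G_1-v_1$, and since the $c_i$'s are the only $G$-edges connecting $V(G_1)\setminus\{v_1\}$ to $V(G_2)\setminus\{v_2\}$, each $P_i$ is forced to begin with $c_i$ and then continue as some path $Q_i$ from $y_i$ to $z$ inside $G_2-v_2$. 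The edge-disjointness of the $P_i$'s therefore reduces to the edge-disjointness of the $Q_i$'s in the connected graph $G_2-v_2$.

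The technical heart of the proof is thus the following routing statement that I would isolate first: \emph{in any connected graph $H$ and for any $t\leq 3$ vertices $y_1,\dots,y_t\in V(H)$ (repetitions allowed), there exist a target $z\in V(H)$ and pairwise edge-disjoint paths $Q_1,\dots,Q_t$ in $H$ such that each $Q_i$ joins $y_i$ to $z$.} To prove it I would fix a spanning tree $T$ of $H$. For $t\leq 2$ take $z=y_1$ and let $Q_2$ be the unique $y_2$-to-$y_1$ path in $T$. For $t=3$ with three distinct $y_i$'s take $z$ to be the \emph{median} of $y_1,y_2,y_3$ in $T$, namely the unique vertex of $T$ lying on the $T$-path between every pair of $y_i$'s, and let each $Q_i$ be the $T$-path from $y_i$ to $z$; these three tree paths are pairwise edge-disjoint, since any shared edge would close a cycle in the tree $T$. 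Repetitions among the $y_i$'s are absorbed by setting $z$ equal to the repeated vertex and making the corresponding $Q_i$'s trivial.

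The main obstacle is exactly this median case of the routing lemma; once it is established, the rest is verification. Each $P_i=c_iQ_i$ is a genuine $x_i$-to-$z$ path in $G$ because $x_i\in V(G_1)\setminus\{v_1\}$ while the vertices of $Q_i$ lie in $V(G_2)\setminus\{v_2\}$, so $P_i$ visits no vertex twice; the $P_i$'s use pairwise distinct crossing edges together with pairwise edge-disjoint continuations inside $G_2-v_2$; and each identity-mapped edge of $E(G_1)\setminus E_{G_1}(v_1)$ lies in $E(G_1-v_1)$, whereas every $P_i$ lies in $\{c_1,\dots,c_t\}\cup E(G_2-v_2)$, so the two families share no edges. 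This yields the required immersion of $G_1$ in $G$, and the symmetric construction produces an immersion of $G_2$.
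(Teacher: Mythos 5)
Your proof is correct and takes essentially the same approach as the paper: both reduce the problem to locating a single ``center'' vertex $z$ in $G_2-v_2$ together with $t$ pairwise edge-disjoint paths from $z$ back to the endpoints of the crossing edges. The paper does this by an explicit case analysis on $|N_{G_2}(v_2)|$ (building the branch vertex $z'$ from two paths $P$ and $Q$ when that set has size $3$), while you package the same idea more uniformly as a routing lemma proved via the median of $y_1,y_2,y_3$ in a spanning tree; the one small imprecision is the aside that a shared edge among the three tree-paths ``would close a cycle in $T$''---the correct justification is that a shared edge would force the tree path $P(y_i,y_j)$ to avoid $z$, contradicting the median property---but this does not affect the validity of the argument.
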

  
\begin{proof} 
Notice that it is enough to  prove that $G_{1}$ is an immersion of $G$. Let $v_1$ and $v_2$ denote the unique vertex of $V(G_1) \setminus V(G)$ and $V(G_2) \setminus V(G)$ respectively. In the case where $G=G_{1}\es_{1}G_{2}$, let $u_{i}$ be the unique neighbor of $v_{i}$ in $G_{i}$, $i\in [2]$. Then 
the function $\{(v,v)\mid v\in V(G_{1}-v_{1})\}\cup \{(v_{1},u_{2})\}$ is an isomorphism from $G_{1}$ to the graph 
$G\setminus V(G_{2}-u_{2})$ (by the definition of the edge-sum $u_{2}u_{1}\in E(G)$) which is a subgraph of $G$. Therefore, $G_{1}\subseteq G$ and thus $G_{1}$ also immerses in $G$.

We now assume that $G=G_{1}\es_{t}G_{2}$, $t=2,3$. Let $e_{j}$, $j\in [|E_{G_{1}}(v_{1})|]$, be the edges of $E_{G_{1}}(v_{1})$ and let $u_{j}$ be the (not necessarily distinct) endpoints of the edges $e_{j}$, $j\in [|E_{G_{1}}(v_{1})|]$, in $G_{1}-v_{1}$.
Notice that in both cases, in order to obtain $G_{1}$ as an immersion of $G$, it is enough to find a vertex $u$ in $V(G)\setminus V(G_{1})$ and for each edge $e_{j}$ of $E_{G_{1}}(v_{1})$ find a path $P_{j}$ from $u$ to $e_{j}$ in $E(G)\setminus E(G_{1})$ such that these paths are edge-disjoint.
In what follows we find such vertex and paths. We distinguish the following cases.\\

\noindent {\em Case 1.}  $N_{G_{2}}(v_{2})=\{y\}$. Then, by the definition of the edge-sum, $G$ contains the edges $ye_{j}^{1}$, $j\in [|E_{G_{1}}(v_{1})|]$. 
Notice that neither the vertex $y$ belongs to $V(G_{1})$ nor the edges $yu_{j}^{1}$, $j\in [|E_{G_{1}}(v_{1})|]$, belong to $E(G_{1})$ and therefore the claim 
holds for $u=y$.\\

\noindent {\em Case 2.} $N_{G_{2}}(v_{2})=\{x,y\}$. First notice that in the case where $G=G_{1}\es_{3}G_{2}$ one of the $x,y$, say $x$, is a 
$2$-neighbor of $v_{2}$. As the edge-sum is internal, the set $E=E(G)\setminus (E(G_{1})\cup E(G_{2}))$ of edges created after the edge-sum is a minimal separator of $G$.
Without loss of generality let $yu_{1}^{1}$, $xu_{2}^{1}$, and (in the case where $G=G_{1}\es_{3}G_{2}$) $xu_{3}^{1}$ be its edges. 
By the minimality of the separator $E$, $G_{2}-v_{2}$ is connected. Therefore there exists a $(x,y)$-path $P$ in $G_{2}-v_{2}$. Observe that the path 
$P\cup \{yu_{1}^{1}\}$, the path consisting only of the edge $xu_{2}^{1}$ and (in the case where $G=G_{1}\es_{3}G_{2}$) the path consisting 
only of the edge $xu_{3}^{1}$ are edge-disjoint paths who do not have any edge from $E(G_{1})$ and share $x$ as a common endpoint. Then
the claim holds for $u=x$.\\

\noindent {\em Case 3.} $N_{G_{2}}(v_{2})=\{x,y,z\}$. In this case, it holds that $G=G_{1}\es_{3}G_{2}$. As above, consider the set 
$E=E(G)\setminus (E(G_{1})\cup E(G_{2}))$ of the edges created by the edge-sum and 
without loss of generality, let $E=\{xu_{1},yu_{2},zu_{3}\}$.
Since $E$ is a minimal separator, the graph $G_{2}-v_{2}$ is connected.
Therefore, there are a $(x,y)$-path $P$ and a $(y,z)$-path $Q$ in $G_{2}-v_{2}$. Let $z'$ be the vertex in $V(P)\cap V(Q)$ such that $V(Q[z,z'])\cap V(P)=\{z'\}$
and consider the paths $Q[z,z']$, $P[x,z']$, and $P[z',y]$ (in the case where $z'=y$ the path $P[z',y]$ is the graph consisting of only one vertex).
Observe that these graphs are edge-disjoint. Therefore the paths $P[x,z']\cup \{xu_{1}\}$, $P[y,z']\cup\{yu_{2}\}$, and $Q[z,z']\cup\{zu_{3}\}$ are edge-disjoint,
do not contain any edge from $E(G_{1})$, and share the vertex $z'$ as an endpoint. Thus, the claim holds for $u=z'$.
It then follows that $G_{1}$ is an immersion of $G$ and this completes the proof of the lemma.
\end{proof}

\begin{theorem}
\label{thm:immersion-preserved}
Let $G$, $G_{1}$, and $G_{2}$ be graphs such that $G = G_1 \hat\oplus_t G_2$, with $t \in [3]$. 
Then, $G$ contains $W_{4}$ as an immersion if and only if $G_{1}$ or $G_{2}$ does as well.
\end{theorem}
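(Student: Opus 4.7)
The plan is to prove the two directions separately. The backward direction is immediate: if $W_{4}$ immerses in $G_{i}$ for some $i\in[2]$, then by Lemma~\ref{lem:immersion-preserved} $G_{i}$ immerses in $G$, and since immersion is transitive, $W_{4}$ immerses in $G$.

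For the forward direction I fix an immersion model $\alpha$ of $W_{4}$ in $G$, with branch vertices consisting of a center $c$ and four rim vertices $r_{1},r_{2},r_{3},r_{4}$, and eight edge-disjoint paths $P_{e}$ ($e\in E(W_{4})$). Let $F\subseteq E(G)$ be the set of cut edges produced by the edge-sum; by definition $|F|=t\leq 3$, and $G\setminus F$ has exactly two components $A_{1}$ and $A_{2}$ with $V(A_{i})=V(G_{i})\setminus\{v_{i}\}$. I classify each branch vertex according to which side of the cut it lies in. The central observation is that $W_{4}$ has edge-connectivity exactly $3$, and the \emph{only} $3$-edge cuts of $W_{4}$ are those isolating a single rim vertex (isolating the center costs $4$ edges, isolating two adjacent rim vertices costs $4$, and two opposite rim vertices cost $6$). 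Since each $W_{4}$-edge whose endpoints are separated by the cut contributes a path that traverses $F$ an odd number of times, and these paths are edge-disjoint, the number of such edges is at most $t$.

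This yields two cases. \textbf{Case A: all branch vertices on the same side}, say in $A_{1}$. Each path $P_{e}$ starts and ends in $A_{1}$, so it uses an even number of edges of $F$. Since $t\leq 3$, at most one path uses edges of $F$, and it uses exactly two of them, say $e_{j_{1}}=x_{1}y_{1}$ and $e_{j_{2}}=x_{2}y_{2}$ with $x_{1},x_{2}\in V(A_{1})$. In $G_{1}$ these correspond to the edges $x_{1}v_{1}$ and $x_{2}v_{1}$, so I replace the $A_{2}$-portion of that path by the detour $x_{1}\,v_{1}\,x_{2}$; all other paths already lie in $G[V(A_{1})]\subseteq G_{1}$. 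The result is an immersion model of $W_{4}$ in $G_{1}$. \textbf{Case B: branch vertices split between the two sides}. By the observation above, this forces $t=3$ and a $1$--$4$ split with a single rim vertex, say $r_{4}$, lying in $A_{2}$. Then the three paths $P_{cr_{4}},P_{r_{1}r_{4}},P_{r_{3}r_{4}}$ each use exactly one edge of $F$, while the remaining five paths lie entirely in $G[V(A_{1})]$. Using $v_{1}$ as a new branch vertex playing the role of $r_{4}$, the three $A_{1}$-side portions concatenated with the corresponding edges of $E_{G_{1}}(v_{1})$ produce three edge-disjoint paths from $v_{1}$ to $c$, $r_{1}$, $r_{3}$ in $G_{1}$. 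Together with the five untouched paths, this is a $W_{4}$-immersion in $G_{1}$.

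The main obstacle is bookkeeping for edge-disjointness after the reroute. What needs checking is that the edges of $E_{G_{1}}(v_{1})$ used in the detours (two in Case A, three in Case B) are pairwise distinct and do not already appear in any path of the model. Distinctness follows from the fact that the bijection $\pi$ in the definition of the edge-sum identifies each cut edge of $G$ with a unique edge of $E_{G_{1}}(v_{1})$. Freshness follows since the untouched paths lie entirely in $G[V(A_{1})]$, which contains no edge incident to $v_{1}$ in $G_{1}$. One also verifies that the new model realizes the correct adjacencies of $W_{4}$: in Case A the structure of $W_{4}$ is preserved verbatim, and in Case B the three edges of $W_{4}$ incident to $r_{4}$ are faithfully simulated by the three detours at $v_{1}$. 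The cases $t=1$ and $t=2$ are automatically absorbed by Case A, since a $1$--$4$ or $2$--$3$ split would require at least $3$ cut edges and so cannot arise.
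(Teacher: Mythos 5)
Your proof is correct and takes essentially the same approach as the paper: fix an immersion model of $W_4$ in $G$, use the fact that every nontrivial vertex bipartition of $W_4$ induces an edge cut of size at least~$3$ (and exactly~$3$ only when a rim vertex is isolated) to show the branch vertices split either $5$--$0$ or $4$--$1$ across $F$, and then reroute through $v_1$ (in the $5$--$0$ case, replacing the unique subpath that detours into $A_2$ by $x_1 v_1 x_2$; in the $4$--$1$ case, substituting $v_1$ for the lone rim vertex in $A_2$ and truncating the three incident paths at the cut). The paper phrases the $4$--$1$ case in terms of lifting the $A_2$-side subpaths and deleting $V(G_2)\setminus\{v_2,x\}$ to recover a copy of $G_1$, whereas you describe the new model directly, but the argument is the same.
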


\begin{proof}
If $G_{1}$ or $G_{2}$ contains $W_{4}$ as an immersion, then $G$ does as well due to Lemma \ref{lem:immersion-preserved}.
It remains to prove the converse direction.

Let $\alpha$ be an immersion of $W_4$ in $G$. We first prove that either $|\alpha(V(W_4)) \cap (V(G_1)-v_1)| \geq 4$, 
or $|\alpha(V(W_4)) \cap (V(G_2)-v_2)| \geq 4$. Indeed, this is due to the fact that any cut $(S,G\setminus S)$ of $W_4$ 
with $|S|=3$ has order at least~4, whereas the cut $F=E(G)\setminus (E(G_{1})\cup E(G_{2}))$ in $G$ between 
$V(G_1)-v_1$ and $V(G_2)-v_2$ has order at most~3. 
Moreover, the same argument implies that the image of the center of $W_{4}$, that is, the unique vertex of degree~4 of $W_4$, say $x_{0}$, belongs to 
the connected component of $G-F$ that contains at least~4 of the branch vertices of the immersion $\alpha$. Let us assume without loss of generality that $x_0 \in V(G_1)-v_1$.

Assume first that $\alpha(V(W_4)) \cap (V(G_1)-v_1) = 5$. 
If for every edge $e$ of $W_{4}$ it holds that $\alpha(e)\cap V(G_{2}-v_{2})=\emptyset$, then clearly $\alpha$ is an immersion of $W_4$ in $G_1-v_1$, and therefore in $G_1$. Moreover, it is easy to observe that there cannot be two distinct 
edges $e,e'$ of $W_4$ whose image path in $G$ contains vertices of $G_2-v_2$, since each such path must contain at least~2 edges of $F$, and $|F| \leq 3$. 
Hence we may assume that there exists a unique edge $e$ with $\alpha(e) \cap V(G_2-v_2) \neq \emptyset$. Note that $\alpha(e)$ must intersect the 
cut $F$ in an even number of edges, since otherwise the path would end in $G_2-v_2$, contradicting our assumption that all branch vertices of 
$\alpha$ lie in $G_1-v_1$. Let $P$ be the maximum subpath of $\alpha(e)$ such that $E(P')\cap E(G_{1}-v_{1})=\emptyset$.
Notice that the first and the last edge of such a path are edges of $F$. Let $u_{1}$ and $u_{2}$ be the endpoints of $P$. This implies that we may obtain an immersion $\alpha'$ of $W_{4}$ in $G_{1}$ by replacing in $\alpha$ the path $P$ by the path $u_{1}v_{1}u_{2}$.

Now, we assume that $\alpha(V(W_4)) \cap (V(G_1)-v_1) = 4$, and denote by $x$ the unique branch vertex of $\alpha$ lying in $V(G_2-v_2)$. 
We claim that it is possible to create an immersion function $\alpha'$ of $W_4$ in $G_1$ by replacing the vertex $x$ in $\alpha$ with $v_1$. 
To show this, we apply the following operations to $G$: let $P_1,P_2,P_3$ be the paths of $\alpha$ whose associated edges in $W_4$ are incident 
with $\alpha^{-1}(x_4)$, and let $P'_1,P'_2,P'_3$ be 
the subpaths of $P_1,P_2$, and $P_3$ 
that do not contain edges of $G_1-v_1$. The paths $P'_1,P'_2,P'_3$ are easily observed to be edge-disjoint, and 
therefore we may lift the edges in each of these paths. We complete the construction by deleting the vertices in $V(G_2)-\{v_2,x_4\}$. The graph obtained 
from this construction is readily observed to be isomorphic to $G_1$ by mapping every vertex of $G_1-v_1$ to itself, and $v_1$ to $x$. Therefore 
$W_4$ immerses in $G_1$. This concludes the proof of the theorem.
\end{proof}

\section{Structure of graphs excluding $W_4$ as an immersion}
\label{sec:main}

In this section, we prove the main result of our paper, namely we provide a structure theorem for graphs that exclude $W_4$ as an immersion. We first provide a technical lemma that will be crucial for the proof of Theorem \ref{thm:main}.

\begin{lemma}
\label{lem:big-lemma}
There exists a function $f$ such that for every integer $r \geq 60000$ and every graph $G$
that does not contain $W_4$ as an immersion, has no internal 3-edge cut, and has a vertex $u$ with $d(u) \geq 4$,
if $tw(G) \geq f(r)$, then there exist vertex sets $S_1,\ldots,S_r$, $Z=\{z_1,\ldots,z_r\}$, and $X$ of $G$, that satisfy the following properties:
\begin{enumerate}[(i)]
\item $z_i \in S_{i}, \forall i \in \{1,\ldots,r\}$;
\item $z_i \not\in S_j, \forall i \neq j \in \{1,\ldots,r\}$;
\item $u \in \bigcap_{i \in \{1,\ldots,r\}}S_i$;
\item $\partial(S_i) \leq 6$;
\item $G[S_i]$ is connected, $\forall i \in \{1,\ldots,r\}$;
\item $X \cap S_i = \emptyset, \forall i \in \{1,\ldots,r\}$;
\item For every $Z' \subseteq Z$ such that $|Z'| \geq 7$, there is a 7-flow from $Z'$ to $X$;
\end{enumerate}
\end{lemma}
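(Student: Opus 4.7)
The plan is to apply Theorem~\ref{thm:big-wall} to $G$ in order to extract a subdivided wall $W$ of height $h$ that is sufficiently large relative to $r$; this fixes the function $f(r)$ as roughly $2^{18\,h(r)^{2}\log h(r)}$ for an appropriate $h(r)=\Theta(r)$. The wall $W$ will serve as the combinatorial workspace from which we carve $S_{1},\ldots,S_{r}$, $Z=\{z_{1},\ldots,z_{r}\}$, and $X$. The lower bound $r\geq 60000$ is needed to make $h(r)$ large enough to accommodate the construction that follows.

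Because $\deg_{G}(u)\geq 4$ and $G$ has no internal $3$-edge cut, any cut of $G$ separating $u$ from a vertex set of size at least~$2$ has at least~$4$ edges, so Menger's theorem yields at least $4$ edge-disjoint paths from $u$ to $W$. The $W_{4}$-immersion-free hypothesis imposes a sharp constraint on such paths: if we could find $4$ edge-disjoint paths from $u$ ending at $4$ distinct vertices $v_{1},\ldots,v_{4}$ of $W$ that lie on a common cycle $C\subseteq W$ and are edge-disjoint from $C$, then the four paths (as spokes) together with the four arcs of $C$ between consecutive $v_{i}$'s would realize $W_{4}$ as an immersion centred at $u$. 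Since a sufficiently tall wall contains cycles through essentially any four of its vertices, this forces the entry points of edge-disjoint $u$-to-$W$ paths to be highly concentrated in $W$.

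For the construction, I would select $r$ pairwise well-separated vertical columns $C_{1},\ldots,C_{r}$ of $W$ together with a residual region of $W$ serving as $X$, all vertex-disjoint (possible since $h(r)$ is large). Pick $z_{i}$ to be a deep interior vertex of $C_{i}$, so that $Z$ and $X$ are disjoint. For each $i$, define $S_{i}$ as the connected component of $u$ in the graph obtained from $G$ after removing a minimum edge cut separating $\{u,z_{i}\}$ from $X\cup(Z\setminus\{z_{i}\})$. By construction, $u,z_{i}\in S_{i}$, $G[S_{i}]$ is connected, $z_{j}\notin S_{i}$ for $j\neq i$, and $X\cap S_{i}=\emptyset$; this settles properties (i), (ii), (iii), (v), and (vi).

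The crux of the proof is the bound $\partial(S_{i})\leq 6$ in property (iv), which I would derive from $W_{4}$-freeness: if the minimum cut exceeded $6$, combining Menger with the ample residual structure of $W\setminus S_{i}$ would let us route $4$ edge-disjoint paths from $u$ across $\partial(S_{i})$ to $4$ distinct vertices on a common cycle of $W\setminus S_{i}$, producing $W_{4}$. The constant $6$ emerges from the interplay between the cut and the $4+4$-edge wheel structure. For property (vii), each individual $z_{i}$ admits at most $6$ edge-disjoint paths to $X$ (since $\partial(S_{i})\leq 6$ and $X\cap S_{i}=\emptyset$), but the rigid row--column geometry of the wall allows any $7$ elements of $Z$, sitting in distinct columns, to collectively send $7$ edge-disjoint paths to $X$ by re-routing through pairwise disjoint wall passages. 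The hardest step will be coordinating the $r$ cuts simultaneously so that properties (iv) and (vii) both survive; I anticipate that a standard uncrossing argument on small edge cuts, combined with the column structure of $W$ and the $W_{4}$-free constraint used to cap each cut at $6$, will suffice to choose the $S_{i}$'s coherently.
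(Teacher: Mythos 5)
Your high-level plan—extract a large wall via Theorem~\ref{thm:big-wall}, select $r$ disjoint structures inside it, pick a $z_i$ in each, bound the cuts around $u$ via $W_4$-freeness, and select $X$ from a residual part of the wall—matches the overall architecture of the paper's proof. But several of the steps that you describe as the ``hardest'' or defer to standard machinery are precisely where the real work lies, and as written there are genuine gaps.

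First, the choice of \emph{columns} rather than \emph{cycles} breaks the $W_4$-freeness argument. The paper's $C_i$'s are cycles, and the auxiliary graph $G_{C_i}$ (delete $E(C_i)$, add a new vertex $v_i$ adjacent to all of $V(C_i)$) is exactly what makes the contradiction clean: $4$ edge-disjoint $u$-to-$v_i$ paths in $G_{C_i}$ translate directly into $4$ edge-disjoint $u$-to-$C_i$ paths in $G$ that avoid $E(C_i)$, hence a $W_4$ immersion with $C_i$ as the rim. Menger then gives a cut $F_i$ of size at most~$3$ in $G_{C_i}$, and the bound $\partial(T_i)\leq 6$ in $G$ is an explicit accounting: $|F_i|\leq 3$ plus at most $2|T_i\cap V(C_i)|-|T_i\cap V(C_i)|\leq 3$ cycle edges reintroduced when passing from $G_{C_i}$ back to $G$. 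Your version asserts $\partial(S_i)\leq 6$ from ``the interplay between the cut and the $4+4$-edge wheel structure,'' but with columns (paths, not cycles) there is no ready-made rim, and the claim that a $7$-cut lets you ``route $4$ edge-disjoint paths from $u$ to $4$ vertices on a common cycle of $W\setminus S_i$, edge-disjoint from that cycle'' needs a real argument; $6$ does not fall out of $W_4$-freeness alone.

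Second, with $S_i$ defined as the component of $u$ after removing a minimum $\{u,z_i\}$-to-$(X\cup Z\setminus\{z_i\})$ edge cut, you have not established property~(i): removing such a cut may well separate $u$ from $z_i$, since a cut separating $A$ from $B$ is not required to keep $A$ connected. The paper avoids this entirely by defining $T_i$ first and then \emph{choosing} $z_i\in T_i\cap V(C_i)$, which is nonempty by its Claim~1.

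Third, property~(ii) is not free and cannot be pushed onto ``a standard uncrossing argument.'' The $T_i$'s can and do overlap, and a given $z_i$ may lie in many $T_j$'s. The paper handles this by building an auxiliary digraph on $6r$ candidate sets with an arc $(T_i,T_j)$ whenever $z_i\in T_j$, proving maximum indegree at most~$6$ (via the flow to $X$ and $\partial(T_j)\leq 6$), concluding $6$-degeneracy, and extracting an independent set of size $r$—this is why the construction starts with $6r$ cycles and not $r$. Your outline selects $r$ columns from the start, so even if each individual step worked, you would have no slack to discard conflicting sets.

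In short: the architecture is recognizably the same, but the auxiliary-graph-with-cycle device (which is what actually converts $W_4$-freeness into a small cut), the verification that $z_i\in S_i$, and the degeneracy argument for property~(ii) are all missing, and these are the substantive parts of the proof.
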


\begin{proof}
Assume that $G$ has treewidth at least $2^{18(6r)^{2}\log (6r)}$, Then, from Theorem~\ref{thm:big-wall}, $G-u$ contains an elementary wall of height $6r$ 
as a topological minor and hence a subdivided wall $W$ of height $6r$ as a subgraph.
We define the cycles $C_1,\ldots,C_{6r}$ as the ones formed by the (original) vertices $w_{5+20p,3+2q}$ to 
$w_{11+20p,3+2q}$ and $w_{11+20p,4+2q}$ to $w_{5+20p,4+2q}$
and the internally vertex-disjoint paths that join them on the wall $W$, for every $p,q \in \{0,\ldots,\lceil\sqrt{6r}\rceil-1\}$.
Observe that $C_1,\ldots,C_{6r}$ is a set of vertex disjoint cycles containing at least~14 vertices in $G-u$.
For every $i \in [6r]$, we denote by $G_{C_i}$ the graph obtained from $G$ by removing the edges of $C_i$ and adding a vertex $v_i$ adjacent exactly to the vertices of $C_i$.
Since $W_4$ does not immerse in $G$, there exists an edge cut $F_i$ of order at most~3 that separates $u$ and $v_i$, 
as otherwise we would be able to find 4 edge-disjoint paths from $u$ to the vertices of the cycle $C_{i}$ and thus, an immersion model of $W_{4}$ on $G$.
Moreover, since both $u$ and $v_i$ have degree at least~4, this edge cut is internal.
We now define the set $T_i$, for every $i \in [6r]$, as the set of vertices that lie in the same connected component of $G_{C_i}-F_i$ as $u$.
\begin{claimm}\label{claim:intersection}
For every $i \in \{1,\ldots,6r\}$, $1 \leq |T_i \cap V(C_i)| \leq 3$.
\end{claimm}

\noindent
\textit{Proof of Claim \ref{claim:intersection}.}
The fact that $|T_i \cap V(C_i)| \geq 1$ follows from the observation that if $T_i \cap V(C_i) = \emptyset$, then the cut $F_i$ is not only a cut in $G_{C_i}$, but also in $G$, which contradicts the assumption that $G$ is internally~4-edge-connected.
On the other hand, observe that for every vertex $w \in V(C_i) \cap T_i$, the edge $v_{i}w$ must belong to the cut $F_i$, as otherwise there is a path joining 
$u$ and $v_{i}$ in $G-F_{i}$, a contradiction. 
Therefore no more than~3 vertices of $C_i$ may lie in $T_i$, which concludes the proof of the claim.
\hfill $\diamond$\\

We now define the set $Z=\{z_1,\ldots,z_{6r}\}$: for every $i \in [6r]$, we choose arbitrarily one vertex of $T_i \cap V(C_i)$ to be the vertex $z_i$. The existence of the vertices $z_i$ follows from Claim~\ref{claim:intersection}. Observe that, by construction of $Z$, it holds that $z_i \in T_i, \forall i \in [6r]$, i.e., the sets $T_i$ satisfy property (i).

Observe that, by construction, $G[T_i]$ is connected.
Moreover, the only edges of $G$ that are not edges of $G_{C_i}$ are the edges of the cycle $C_i$.
Thus, the only edges in the cut $(T_i, G\setminus T_i)$ of $G$ that are not edges of the cut $F_i$ in $G_{C_i}$ are the edges of $C_i$ incident with the vertices of $T_i \cap V(C_i)$.
Furthermore, for every vertex $w$ of $T_i \cap V(C_i)$, the edge $wv_i$ belongs to the cut $F_i$ in $G_{C_i}$, but not to the cut $(T_i, G\setminus T_i)$ in $G$.
Hence, the number of edges of the cut $(T_i, G\setminus T_i)$ in $G$ is at most $|F_i| + 2|T_i \cap V(C_i)| - |T_i \cap V(C_i)|$.
Since $F_i$ and $T_i \cap V(C_i)$ both have order at most~3, it follows that the cut $(T_i, G\setminus T_i)$ in $G$ has order at most~6.
We have therefore proved that properties (iii)-(v) hold for the sets $T_i, i \in [6r]$.

We may now define the set $X$. We first start with the set of (original) vertices $w_{p,q}$ of the wall, with $6r+1 - 36(\lceil\sqrt{6r}\rceil+1) \leq p \leq 6r+1 + 36(\lceil\sqrt{6r}\rceil+1)$ and $6r+1 - (\lceil\sqrt{6r}\rceil+1) \leq q \leq 6r+1 - 73(\lceil\sqrt{6r}\rceil+1)$. This set, denoted $X_0$, contains at least $72(\sqrt{6r}+1)^2 \geq 72r+73$ original vertices of the wall, due to $r \geq 1$. We now need the following:

\begin{claimm}\label{claim:overlap}
For every $i \in [6r]$, $|X_0 \cap T_i| \leq 72$.
\end{claimm}

\noindent
\textit{Proof of Claim \ref{claim:overlap}.}
We prove the claim by showing that for every $i \in [6r]$ and every subset $X'_0$ of $X_0$ that contains at least~73 vertices, there are~7 disjoint paths from vertices of $C_i \setminus T_i$ to vertices of $X'_0$ in $G$. Together with property (iv), this will imply validity of Claim \ref{claim:overlap}.
Consider a subset $X'_0$ of $X_0$ that contains least~73 original vertices of the wall. Observe that there must be 13 vertices that lie on the same horizontal path, or 7 vertices that lie on different horizontal paths. From there, taking into account the dimensions of the wall and the position of the vertices of $C_i$ and $X_0$, it is easy to observe that there always exist vertices $y_1,\ldots,y_7$ in $C_i \setminus T_i$ and $x_1,\ldots,x_7$ in $X'_0$ such that there are~7 disjoint paths between $y_1,\ldots,y_7$ and $x_1,\ldots,x_7$.
\hfill$\diamond$\\

Therefore, the set $X_0 \cap \bigcup T_i$ contains at most $72r$ vertices, which implies that there exists a subset $X$ of $X_0$ containing at least $73$ vertices such that $X \cap T_i = \emptyset$ for every $i \in [6r]$. This proves property (vi) for the sets $T_i, i \in [6r]$.
The validity of property (vii) follows from arguments similar to those given in the proof of Claim \ref{claim:overlap}.

Finally, we show  how to select sets $S_1,\ldots,S_r$ among $T_1,\ldots,T_{6r}$ so that property (ii) holds, namely that for every $1 \leq i \neq j \leq r, z_i \not\in S_j$.
In order to find such sets, we proceed as follows: let $H$ be a directed graph such that $V(H)=\{T_1,\ldots,T_{6r}\}$, and $(T_i,T_j)$ is an arc of $H$ if and only if $z_i \in S_j$. We now claim that vertices of $H$ have indegree at most~6. This is shown by combining properties (iv), (vi), and (vii). Assume for contradiction that there is a vertex in $H$ having indegree at least~7, then there exist distinct indices $i_1,\ldots,i_7$ and $j$ such that $z_{i_1},\ldots,z_{i_7} \in S_j$. However, we know that there exist~7 disjoint paths from $\{z_{i_1},\ldots,z_{i_7}\}$ to $X$ by property (vii). Together with property (vi), we obtain a contradiction with property (iv). Therefore, we conclude that the directed graph $H$ has maximum indegree at most~6. Thus, $|E(H)| \leq 36r$, which implies that the average degree of $H$ is at most~6. Hence, $H$ is~6-degenerate and thus contains an independent set of size at least $\frac{|V(H)|}{6} = r$. The vertices of such an independent set correspond to sets $T_{i_1},\ldots,T_{i_
r}$ such that, for every $1 \leq p \neq q \leq r$, $z_{i_p} \not\in T_{i_q}$. Therefore, we choose $S_p := T_{i_p}$ for every $p \in [r]$ and observe that the set $S_1,\ldots,S_r$ as defined indeed satisfy property (ii).

Finally, since every set $T_i$ satisfies properties (i) and (iii)-(vi), and for every $j \in [r]$ there exists $i \in [6r]$ such that $S_j=T_i$, we obtain that the sets $S_i$ satisfy these properties as well.
This concludes the proof of the lemma.
\end{proof}

Lemma \ref{lem:big-lemma} essentially states that large treewidth yields a large number of vertex disjoint cycles that are highly connected to each other, and an additional disjoint set that is highly connected to these cycles. However, this, together with the assumption that $W_4$ does not immerse in $G$, implies that there cannot be a large flow between a vertex of degree at least~4 and one of the cycles. We will combine this fact with the notion of important separators to obtain Lemma~\ref{lem:bounded-treewidth}.

\begin{definition}
Let $X,Y \subseteq V(G)$ be vertices, $S \subseteq E(G)$ be an $(X,Y)$-separator, and let $R$ be the set of vertices reachable from $X$ in $G \setminus S$.
We say that $S$ is an important $(X,Y)$-separator if it is inclusion-wise minimal and there is no $(X,Y)$-separator $S'$ with $|S'| \leq |S|$ such that $R' \subset R$, where $R'$ is the set of vertices reachable from $X$ in $G \setminus S'$.
\end{definition}

\begin{theorem}\cite{Marx06,CLL07}
\label{thm:imp-sep}
Let $X,Y \subseteq V(G)$ be two sets of vertices in graph $G$, let $k \geq 0$ be an integer, and let $S_k$ be the set of all $(X,Y)$-important separators of size at most $k$. Then $|S_k| \leq 4^k$ and $S_k$ can be constructed in time $|S_k| \cdot n^{O(1)}$.
\end{theorem}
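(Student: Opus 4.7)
The plan is to prove Theorem~\ref{thm:imp-sep} via a branching enumeration algorithm that simultaneously yields the bound $|S_k| \leq 4^k$ and a construction running in time $|S_k| \cdot n^{O(1)}$.

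I would first prove a structural lemma: for every minimum $(X,Y)$-cut $C$ with reachable set $R_C$ and every important $(X,Y)$-separator $S$ of size at most $k$ with reachable set $R_S$, one has $R_C \subseteq R_S$. The argument is by submodularity of the cut function:
\[
|\delta(R_C \cup R_S)| + |\delta(R_C \cap R_S)| \;\leq\; |\delta(R_C)| + |\delta(R_S)| \;=\; \lambda + |S|,
\]
and since $R_C \cap R_S$ also contains $X$ and avoids $Y$, it is itself an $(X,Y)$-separator, so $|\delta(R_C \cap R_S)| \geq \lambda$, giving $|\delta(R_C \cup R_S)| \leq |S|$; the maximality of $R_S$ defining importance of $S$ then forces $R_C \cup R_S = R_S$, i.e., $R_C \subseteq R_S$.

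The branching algorithm on input $(G, X, Y, k)$ would proceed as follows. Compute $\lambda$ and the minimum $(X,Y)$-cut $C^*$ closest to $X$, with reachable set $R^* := R_{C^*}$, via max-flow. If $\lambda > k$, return $\emptyset$; if $\lambda = 0$, return $\{\emptyset\}$. Otherwise pick any edge $e = uv \in C^*$ with $u \in R^*$ and $v \notin R^*$, and branch on whether $e$ lies in the sought $S$. In the \emph{include} branch, recurse on $(G - e, X, Y, k-1)$ and prepend $e$ to each returned separator. In the \emph{exclude} branch (skipped if $v \in Y$), the structural lemma guarantees that every important $S$ with $e \notin S$ satisfies $u \in R_S$ and hence $v \in R_S$, so one recurses on $(G, X \cup \{v\}, Y, k)$. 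The returned candidates are filtered in polynomial time to keep only those that are truly important.

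For the bound, I would use the potential $\mu = 2k - \lambda$, initially at most $2k$. The include branch drops $\mu$ by exactly one, since $e$ lies in a minimum cut so $\lambda$ drops by one alongside $k$. The hard part will be showing that the exclude branch also drops $\mu$ by at least one, i.e., that $\lambda(G, X \cup \{v\}, Y) \geq \lambda + 1$; the plan is a flow-augmentation argument exploiting the fact that $C^*$ is the cut \emph{closest to} $X$, so that once $v$ is merged into the source the residual graph of any max $(X,Y)$-flow admits a new augmenting path from $v$ to $Y$ (otherwise, combining a same-size cut of the augmented instance with $C^*$ via submodularity would yield a minimum $(X,Y)$-cut with reachable set strictly contained in $R^*$, contradicting the choice of $C^*$). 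Granted this key step, $\mu$ strictly decreases at every branching step, so the recursion tree has depth at most $2k$ and at most $2^{2k} = 4^k$ leaves, each a distinct element of $S_k$. Since every recursive call is dominated by a polynomial-time max-flow computation on $G$, the claimed total enumeration time $|S_k| \cdot n^{O(1)}$ follows.
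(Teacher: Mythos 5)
The paper cites Theorem~\ref{thm:imp-sep} from the literature (Marx~\cite{Marx06} and Chen, Liu, Lu~\cite{CLL07}) and does not prove it, so there is no in-paper proof to match against; I will therefore just assess your argument on its own.

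Your structural lemma is correct, and so is the include-branch analysis. The genuine gap is in the exclude branch, and it stems from choosing $C^*$ to be the minimum $(X,Y)$-cut \emph{closest to $X$}, i.e.\ the one with the \emph{smallest} reachable set $R^*$. With that choice, the claim that $\lambda(G, X\cup\{v\}, Y) \geq \lambda + 1$ is simply false. Take $G$ to be the path $X - a - b - Y$ with $k=1$: here $\lambda = 1$, $R^* = \{X\}$, $C^* = \{Xa\}$, $e = Xa$, $v = a$, and $\lambda(G, \{X,a\}, Y) = 1 = \lambda$. Your sketched flow-augmentation/submodularity argument does not rescue this: uncrossing $C' = \{ab\}$ (a same-size cut of the augmented instance, with $R' = \{X,a\}$) against $C^*$ gives $R^* \cap R' = \{X\} = R^*$, not a strictly \emph{smaller} reachable set, so no contradiction arises. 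In fact, in this example the recursion keeps taking the exclude branch with constant potential $\mu = 2k - \lambda = 1$ for $\Theta(n)$ steps, so the recursion tree does not have at most $4^k$ leaves, and the potential argument for the combinatorial bound collapses.

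The fix is to take $C^*$ to be the minimum cut \emph{farthest from $X$} (closest to $Y$), i.e.\ the one whose reachable set $R^*$ is \emph{maximal}. The structural lemma you proved holds for \emph{every} minimum cut, so it still gives $R^* \subseteq R_S$ for this choice. But now the key inequality goes through: if $\lambda(G, X\cup\{v\}, Y) = \lambda$, then a minimum $(X\cup\{v\},Y)$-cut has a reachable side $R'$ with $X \subseteq R'$, $Y\cap R' = \emptyset$, $|\delta(R')| = \lambda$, and $v \in R'$; by the lattice structure of minimum-cut sides (again via submodularity) $R' \subseteq R^*$, contradicting $v \notin R^*$. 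Note that the contradiction is a reachable set strictly \emph{containing} $R^*$, the opposite direction from what you wrote, which is exactly why the orientation of $C^*$ matters. With this correction, both branches drop $\mu = 2k - \lambda$ by at least one and the rest of your plan works.
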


Theorem \ref{thm:imp-sep} states that the number of important separators of a certain size is bounded. The next lemma combines this fact with Lemma \ref{lem:big-lemma}.

\begin{lemma}
\label{lem:bounded-treewidth}
Let $G$ be a graph such that $G$ does not contain $W_4$ as an immersion, has no internal 3-edge cut and has a vertex $u$ with $d(u) \geq 4$.
Then the treewidth of $G$ is upper bounded by a constant.
\end{lemma}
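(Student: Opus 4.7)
The plan is to argue by contradiction: assume $tw(G)$ is larger than any fixed constant, apply Lemma~\ref{lem:big-lemma} to extract a rich family of sets $S_1,\ldots,S_r$, and combine it with the bound on the number of important $(u,X)$-separators (Theorem~\ref{thm:imp-sep}) to force a contradiction with the high flow guaranteed by property~(vii).

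Fix $r := 60000$, which is large enough to apply Lemma~\ref{lem:big-lemma} and which also satisfies $r > 6\cdot 4^6$. Suppose towards a contradiction that $tw(G) \geq f(r)$, where $f$ is the function of Lemma~\ref{lem:big-lemma}. The lemma supplies sets $S_1,\ldots,S_r$, branch vertices $z_1,\ldots,z_r$, and a vertex set $X$ satisfying properties (i)--(vii). For each $i \in [r]$, let $F_i := \partial(S_i)$; by (iv) we have $|F_i| \leq 6$, and by (iii) and (vi), $u \in S_i$ while $X \subseteq V(G) \setminus S_i$, so $F_i$ is an edge $(u,X)$-separator.

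Now I apply the standard pushing procedure: for each $F_i$ there exists an important $(u,X)$-separator $F_i^*$ with $|F_i^*| \leq |F_i| \leq 6$ such that the set $R_i^*$ of vertices reachable from $u$ in $G \setminus F_i^*$ satisfies $S_i \subseteq R_i^*$; in particular, $z_i \in R_i^*$. By Theorem~\ref{thm:imp-sep}, there are at most $4^6 = 4096$ important $(u,X)$-separators of size at most $6$. Since $r = 60000 > 6 \cdot 4^6$, pigeonhole yields seven indices $i_1,\ldots,i_7$ with $F_{i_1}^* = \cdots = F_{i_7}^* =: F^*$. Writing $R^*$ for the $u$-side of $F^*$, we have $\{z_{i_1},\ldots,z_{i_7}\} \subseteq R^*$ while $X \cap R^* = \emptyset$.

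To finish, by property (vii) there are seven pairwise edge-disjoint paths from $\{z_{i_1},\ldots,z_{i_7}\}$ to $X$. Every such path starts in $R^*$ and ends outside $R^*$, so each one must contain at least one edge of the edge cut $(R^*, V(G)\setminus R^*)$; by minimality of an important separator, this cut equals $F^*$. Hence $|F^*| \geq 7$, contradicting $|F^*| \leq 6$. Consequently $tw(G) < f(60000)$, a constant, which is the desired bound. The only delicate step is invoking the pushing lemma in the correct edge-separator formulation so that the reachable set grows from $S_i$ to $R_i^*$ while the size of the separator does not increase; this is standard for important separators, but needs to be cited or sketched cleanly, and it is where I expect the main technical care.
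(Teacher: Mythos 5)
Your proposal is correct and follows essentially the same route as the paper: apply Lemma~\ref{lem:big-lemma} to obtain the sets $S_i$, $z_i$, $X$, push each cut $\partial(S_i)$ to an important $(u,X)$-separator of size at most~$6$, bound the number of such important separators by $4^6$ via Theorem~\ref{thm:imp-sep}, and derive a contradiction from property~(vii) once seven of the $z_i$'s land on the $u$-side of the same important separator. Your version makes the pigeonhole step and the pushing argument explicit, whereas the paper phrases the same count slightly more loosely (``each important separator has at most~$6$ sets $S_i$ on its $u$-side''), but the underlying reasoning and constants are identical.
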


\begin{proof}
If $G$ has treewidth at least $2^{18(6r)^{2}\log (6r)}$ for $r \geq 60000$, then there exist sets $Z=\{z_1,\ldots,z_r\}$, $S_1,\ldots,S_r$ and $X$ that satisfy the properties of Lemma~\ref{lem:big-lemma}.
Recall that $F$ is an important $(u,X)$-separator if there is no $(u,X)$-separator $F'$ such that $|F'| \leq |F|$ and the connected component of $G-F$ that contains $u$ is properly contained in the connected component of $G-F'$ that contains $u$.
Additionally, observe that for every set $S_i$, there is an important separator $F$ or order at most~6 such that $S_i$ lies in the same connected component as $\{u\}$ in $G-F$. 
Moreover, for any cut $F$ of order at most~6 such that $S_i$ is contained in the same connected component as $u$ in $G-F$, there cannot be~7 disjoint paths from $u$ to $X$ through $F$.
Combined with property (vii) of Lemma \ref{lem:big-lemma} and the fact that every set $S_i$ contains a vertex $z_i$, this implies that for every important separator $F$, there are at most~6 sets $S_{i_1},\ldots,S_{i_p}, p \leq 6$, that are contained in the same connected component as $u$ in $G-F$.
However, Theorem~\ref{thm:imp-sep} ensures that there are at most $4^6$ important $(X,\{u\})$-separators of size at most~6 in $G$.
Therefore, if $r \geq 60000 > 6 \cdot 4^6$, there is a set $S_i$ such that the cut $(S_i,G-S_i)$ has order at least~7. Thus, we conclude that either $G$ has an internal edge cut of order at most~3, or it has no vertex of degree at least~4, or it contains $W_4$ as an immersion. Hence the lemma holds.
\end{proof}

We are now ready to prove the main theorem of our paper.

\begin{theorem}
\label{thm:main}
Let $G$ be a graph that does not contain $W_4$ as an immersion. Then the prime graphs of a decomposition of $G$ via $i$-edge-sums, $i\in [3]$, are either subcubic graphs, or have treewidth upper bounded by a constant.
\end{theorem}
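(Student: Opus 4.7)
The plan is to construct the decomposition by iteratively splitting $G$ along internal $i$-edge cuts with $i \in [3]$, and then to analyse the resulting prime pieces by combining the two main technical results already established, namely Theorem~\ref{thm:immersion-preserved} and Lemma~\ref{lem:bounded-treewidth}.

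First I would proceed by induction on $|V(G)|$. If $G$ admits no internal $i$-edge cut for any $i \in [3]$, then $G$ is itself prime and there is nothing to decompose. Otherwise, pick any such internal cut of order $t \in [3]$; the definition of internal edge-sum yields a factorisation $G = G_1 \hat{\oplus}_t G_2$ in which each side of the cut contains at least two vertices, so $|V(G_1)|, |V(G_2)| < |V(G)|$. Applying the induction hypothesis to $G_1$ and $G_2$ produces a decomposition of each into prime pieces via $i$-edge-sums with $i \in [3]$, and concatenating them gives the desired decomposition of $G$.

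Next I would verify that every prime piece $P$ produced in this way remains $W_4$-immersion-free. This is an immediate consequence of Theorem~\ref{thm:immersion-preserved}, applied inductively along the decomposition tree: whenever $H = H_1 \hat{\oplus}_t H_2$ does not contain $W_4$ as an immersion, neither $H_1$ nor $H_2$ does. To conclude, I would split on the maximum degree of each prime piece $P$. If $\Delta(P) \leq 3$, then $P$ is subcubic by definition. Otherwise $P$ contains a vertex $u$ with $\deg_P(u) \geq 4$; since $P$ is prime, it admits in particular no internal $3$-edge cut, and since $P$ inherits $W_4$-immersion-freeness, the three hypotheses of Lemma~\ref{lem:bounded-treewidth} are simultaneously met, so $tw(P)$ is bounded by an absolute constant.

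The main obstacle has, in fact, already been overcome in Lemmas~\ref{lem:big-lemma} and~\ref{lem:bounded-treewidth}, whose combined effect turns the structural assumption ``no $W_4$-immersion plus internal 4-edge-connectivity plus a vertex of degree at least $4$'' into a treewidth bound via large walls and important separators. The present theorem is essentially the assembly step; the only care required is to check that the recursion terminates (guaranteed by the strict decrease $|V(G_i)| < |V(G)|$ arising from the internality of the cut) and that the hypotheses of Lemma~\ref{lem:bounded-treewidth} are inherited by every leaf of the decomposition tree, both of which are straightforward.
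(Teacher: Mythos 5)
Your proposal is correct and follows essentially the same strategy as the paper: decompose along internal $i$-edge-sums for $i\in[3]$, use Theorem~\ref{thm:immersion-preserved} to push $W_4$-immersion-freeness down to the prime pieces, and apply Lemma~\ref{lem:bounded-treewidth} to any non-subcubic prime piece. You are in fact slightly more careful than the paper's write-up in deriving the absence of internal $3$-edge cuts from the definition of primality (rather than attributing it to Theorem~\ref{thm:immersion-preserved}), and in arguing termination of the decomposition via the strict vertex-count decrease, but the underlying argument is the same.
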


\begin{proof}
Let us consider a decomposition of $G$ via $i$-edge-sums, $i \in [3]$, and let $H$ be a prime graph of such a decomposition. Note first that, since $G$ does not contain $W_4$ as an immersion, then $H$ does not contain it either, due to Theorem~\ref{thm:immersion-preserved}. Now, assume that $H$ is not subcubic. Then there is a vertex $u$ of degree at least~4 in $H$. Moreover, it is clear from Theorem~\ref{thm:immersion-preserved} that $H$ is internally~4 edge-connected. Hence, we may apply Lemma~\ref{lem:bounded-treewidth} and conclude that $H$ has treewidth at most $2^{2^{13}\cdot 3^{6}\cdot 5^{8}\cdot\log (2^{6}\cdot 3^{2}\cdot 5^{4})}$. Thus, the theorem holds.
\end{proof}

We conclude this section by noting that Theorem \ref{thm:main} is in a sense tight: indeed, both the fact that we decompose along edge-sums of order at most~3 and the requirement that a unique vertex of degree at least~4 is sufficient to enforce small treewidth are necessary. The fact that decomposing along internal 3-edge-sums is necessary can be seen from the fact that there are internally~3 edge-connected graphs that have vertices of degree at least~4 and yet do not contain $W_4$ as an immersion, e.g., a cycle where every edge is doubled.

\section{Concluding remarks}

Following the proof of Theorem \ref{thm:main}, the first task is to improve the bound on the treewidth of internally~4 edge-connected graphs that exclude $W_4$ as an immersion and have a vertex of degree at least~4.
Our proof of Theorem \ref{thm:main} relies on the fact that large treewidth ensures the existence of a large number of vertex disjoint cycles that are highly connected to each other.
In order to obtain these cycles, we use the fact that graphs of large treewidth contain a large wall as a topological minor. However, the value of treewidth required to find a sufficiently large wall is currently enormous.
Avoiding to rely on the existence of a large wall would be an efficient way to drastically reduce the constants in Lemma \ref{lem:big-lemma} and Theorem~\ref{thm:main}.

Another question that we leave open is to prove a similar result for larger wheels, i.e.,~$W_k$ for $k \geq 5$. Providing a decomposition theorem for larger wheel seems to be a challenging task, as edge-sums no longer seem to be the proper way to proceed, since, as argued in Section~\ref{sec:main}, $k$ edge connectivity is necessary, but $W_k$-immersion is not preserved under edge-sums of order $k-1$, as seen in Figure~\ref{fig:ex1} and~\ref{fig:ex2}.

\begin{figure}[h]
\begin{center}
\includegraphics[scale=0.17]{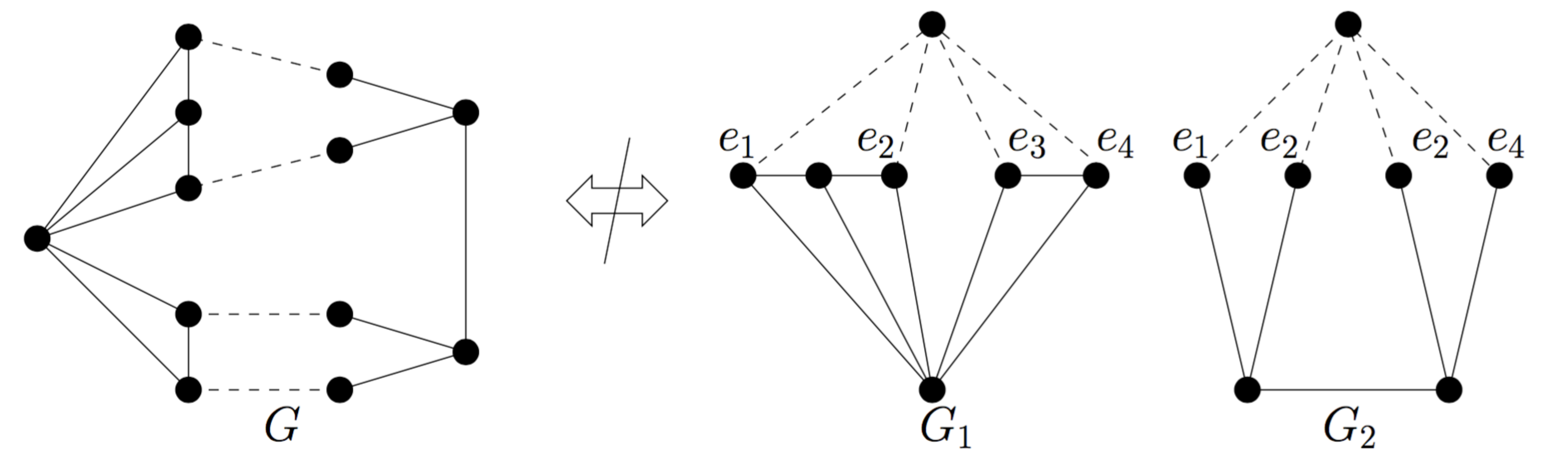} 
\caption{$G_{1}$ contains $W_{5}$ as an immersion but $G=G_{1}\es_{4}G_{2}$ does not.
The unique vertex in $G_i$ incident with dotted edges is the vertex $v_i$, and the edge-sum maps to each other edges of $G_1$ and $G_2$ with the same label.}
\label{fig:ex1}
\end{center}
\end{figure}

Decomposition theorems exist when small wheels are excluded as topological minors \cite{Farr88,RF09a,RobinsonF14}, however these results do not apply when excluding wheels as immersions, as in this case we must  consider multigraphs.
A similar important question is to characterize graphs excluding $K_5$ as an immersion.

Finally, note that the general algorithm to test immersion containment runs in cubic time for every fixed target graph $H$. We believe that Theorem \ref{thm:main} can be used to devise efficient algorithms to recognize graphs that exclude $W_4$ as an immersion.

\begin{figure}[h]
\begin{center}
\includegraphics[scale=0.23]{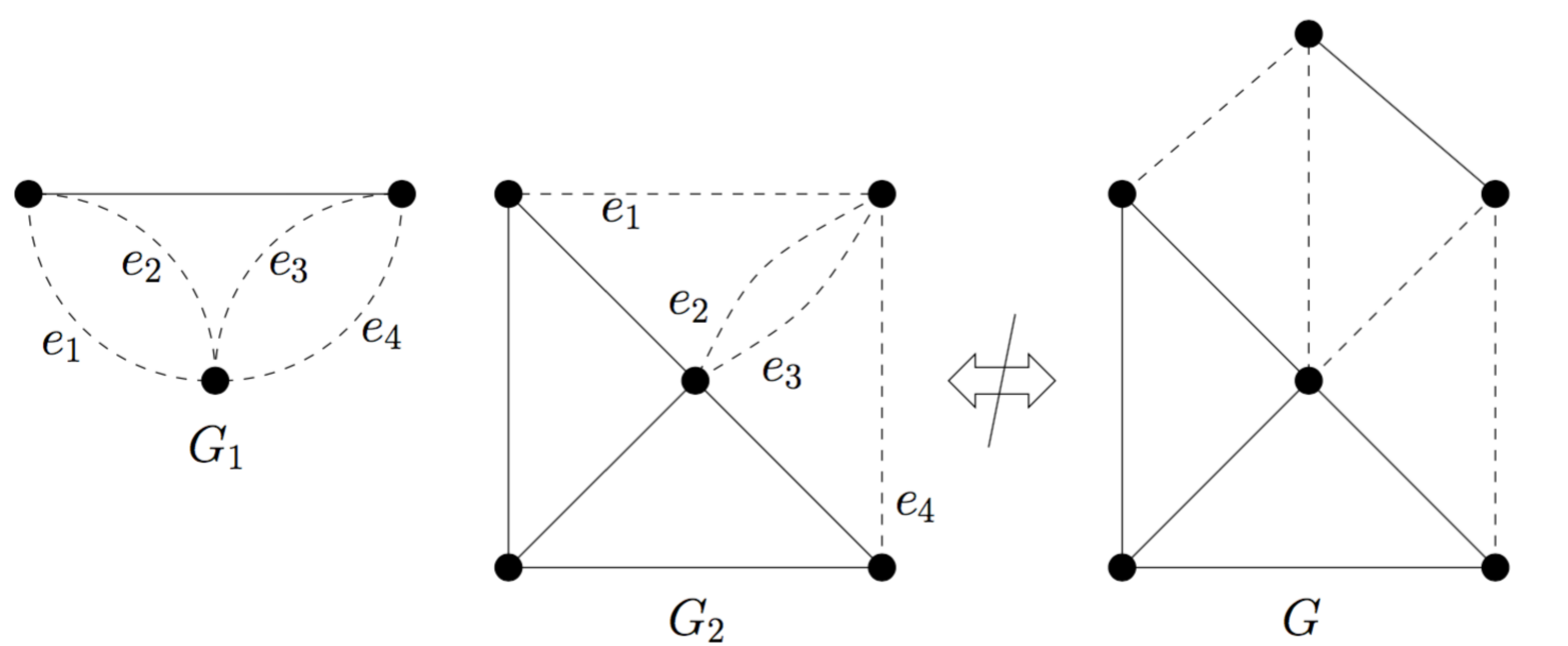} 
\caption{Neither $G_{1}$ nor $G_{2}$ contain $W_{4}$ as immersion but $G=G_{1}\es_{4}G_{2}$ does.
The unique vertex in $G_i$ incident with dotted edges is the vertex $v_i$, and the edge-sum maps to each other edges of $G_1$ and $G_2$ with the same label.}
\label{fig:ex2}
\end{center}
\end{figure}

%
%
\end{document}